\renewcommand{\PrintDOI}[1]{\href{http://dx.doi.org/\detokenize{#1}}{doi: \detokenize{#1}}%
	\IfEmptyBibField{pages}{, (to appear in print)}{}}
\newcommand{\Z}{\mathbb{Z}}
\newcommand{\rt}{\vartriangleright}
\newtheorem{theorem}{Theorem}[section]
\newtheorem{corollary}[theorem]{Corollary}
\newtheorem{proposition}[theorem]{Proposition}
\newtheorem{definition}[theorem]{Definition}
\newtheorem{example}[theorem]{Example}
\newtheorem{remark}[theorem]{Remark}
\title{$f$-Racks, $f$-Quandles, their Extensions and Cohomology }
\author{Indu R. U. Churchill}
\address{Department of Mathematics,
	University of South Florida, Tampa, FL 33620, U.S.A.}
 \email{udyanganiesi@mail.usf.edu}
\author{M. Elhamdadi}
\address{Department of Mathematics,
	University of South Florida, Tampa, FL 33620, U.S.A.}
\email{emohamed@math.usf.edu}
 \author{M. Green}
 \address{Department of Mathematics,
 	University of South Florida, Tampa, FL 33620, U.S.A.}
 \email{mjgreen@mail.usf.edu}
 \author{A. Makhlouf}
 \address{Laboratoire de Math\'ematiques, Informatique et Application. 
 	Universit\'e de Haute Alsace, France}
 \email{Abdenacer.Makhlouf@uha.fr}
\begin{document}

\begin{abstract}
\noindent The purpose of this paper is to introduce and study the  notions of  $f$-rack and $f$-quandle which are obtained by twisting the usual equational identities by a  map.  We provide some key constructions, examples and classification of low order $f$-quandles.  Moreover, we define modules over $f$-racks, discuss 
extensions and define a cohomology theory for $f$-quandles and give examples.
\end{abstract}

\maketitle


\tableofcontents

%

\section{Introduction}

\noindent  Shelves, racks and quandles are algebraic structures whose axioms come from Reidemeister moves in knot theory.  The earliest known work on racks is contained in the 1959 correspondence between John Conway and Gavin Wraith \cite{CW} who studied racks in the context of  the conjugation operation in a group.  Around 1982,  the notion of a quandle was introduced independently by Joyce \cite{Joyce} and Matveev \cite{Matveev}.  They used it to construct representations of  braid groups.  Joyce and Matveev associated to each knot a quandle that determines the knot up to isotopy and mirror image.  Since then quandles and racks have been investigated by topologists  in order to construct knot and link invariants, see \cite{Carter-Crans-Elhamdadi-Saito,Carter-Elhamdadi-Grana-Saito, CES, CENS, FR, EN} for more details.  Quandles also can be investigated on their own right as non-associative algebraic structures.\\
A Hom-algebra structure is a multiplication on a vector space where the structure is twisted by some map.  Thus any algebraic structure defined by axioms can be twisted to give a new generalization of the structure.  The former structure can be then seen as a specialization of the later when the map is the identity.  An example is the Hom-Lie algebras which were introduced in \cites{HLS, LS} with the goal of studying the deformations of Witt algebra, which is the complex Lie algebra of derivations of the Laurent polynomials in one variable, and the deformation of the Virasoro algebra, a one-dimensional central extension of Witt algebra.   Since then Hom-structures in different settings (algebras, coalgebras, Hopf algebras, Leibniz algebras, $n$-ary algebras, etc.) were investigated by many authors (see for example \cites{HLS,LS,MS4,MakhloufSilv,Yau1,Yau2, Yau3}).  In this article, we apply this principle of twisting the algebraic equations to the context of quandles to obtain a notion of $f$-quandle.  We study the properties of $f$-racks, $f$-quandles, their classification, extensions, modules and their cohomology.

\noindent The paper is organized as follows.  In Section 2, we define $f$-quandles and give some examples.  Classification of $f$-quandles of low dimension are provided in Section 3.  In Section 4, dynamical extensions of $f$-quandles are defined giving rise to a notion of $f$-quandle $2$-cocycle.  Explicit formulas relating group $2$-cocycles to $f$-quandle $2$-cocycles, when the $f$-quandle are constructed from groups are given.  In Section 5 we develop a cohomology theory and in the last remark, we discuss some natural questions on connections with Knot Theory. \\

\section{$f$-quandles}
We start by reviewing the notion of quandle and give some examples.
\noindent A {{\it quandle}} $X$ is a non-empty set with a binary operation $(a, b) \mapsto a  \rt  b$
satisfying the following axioms:
\begin{eqnarray*}
\label{axiom1}
({\rm I}) & & \mbox{\rm  For any $a \in X$,
$a \rt  a =a$.}  \\
({\rm II}) & &\mbox{\rm For any $b,c \in X$, there is a unique $a \in X$ such that
$ a \rt b=c$.} \label{axiom2} \\
\label{axiom3}
({\rm III}) & & \mbox{\rm For any $a,b,c \in X$, we have
$ (a \rt  b) \rt  c=(a \rt  c) \rt (b \rt c). $}
\end{eqnarray*}

\noindent Typical examples of quandles include the following:
  \begin{itemize}
 \setlength{\itemsep}{-3pt}

\item
A group $X=G$ with
$n$-fold conjugation
as the quandle operation: $a \rt  b=b^{-n} a b^n$.\\

\item
Let $n$ be a positive integer.
For
$a, b \in \Z_n$ (integers modulo $n$),
define
$a  \rt  b \equiv 2b-a \pmod{n}$.
Then the operation $ \rt $ defines a quandle
structure  called the {\it dihedral quandle},
  $R_n$.
This set can be identified with  the
set of reflections of a regular $n$-gon
  with conjugation
as the quandle operation.\\
\item
Any ${\Z }[T, T^{-1}]$-module $M$
is a quandle with
$a \rt b=Ta+(1-T)b$, $a,b \in M$, called an {\it  Alexander  quandle}.
  \end{itemize}

 \noindent
  Let $X$ be a quandle.  The {\it right translation}   $\mathcal{R}_a:X \rightarrow  X$, by $a \in X$, is defined by ${\mathcal{R}}_a(x) = x \rt a$ for $x \in X$.  Then by axiom (II), the map  ${\mathcal R}_a$ is a bijection of $X$.  It is seen that the operation $\overline{ \rt }$ on $X$ defined by $x\  \overline{ \rt }\  a= {\mathcal R}_a^{-1} (x) $ is a quandle operation satisfying $(x \rt a)\; \overline{ \rt } \; a=a=(x\;  \overline{ \rt } \;a) \rt a$.  The set $(X,  \overline{ \rt }) $ is called the {\it dual} quandle of $(X,  \rt )$.

\subsection{Definitions and Properties}

\begin{definition}\label{twistdef}
A {\it $f$-shelf} is a triple $(X, *, f)$ in which $X$ is a set, $*$ is a binary operation on $X$, and $f \colon X \to X$ is a map such that, for any $x,y,z \in X$, the identity
\begin{equation} \label{twistedCon-I}
(x *y) * f(z) =  (x * z) * (y * z)
 \end{equation}
holds.  A {\it $f$-rack} is a $f$-shelf such that, for any $x,y \in X$, there exists a unique $z \in X$ such that
\begin{equation} \label{twistedCon-II}
z* y = f(x).
\end{equation}
A {\it $f$-quandle} is a $f$-rack such that, for each $x \in X$, the identity 
\begin{equation} \label{twistedCon-III}
x* x =f(x)
\end{equation}
holds.\\
A \textit{ $f$-crossed set} is a $f$-quandle $(X, *, f)$ such that  $f: X \to X$ satisfies $x * y =f(x) $ whenever   $ y* x = f(y)$ for any $x, y \in X$.
\end{definition}

\begin{remark}
Using the right translation $R_a: X \to X$ defined as $R_a(x) = a * x$,  the identity (\ref{twistedCon-I})  can be written as $R_{f(z)}(x*y) = R_z(x) * R_z(y) $ or $R_{f(z)} \circ R_y = R_{R_z(y)}\circ R_z$ for any $x, y , z \in X$.\\
The extra condition in the $f$-crossed definition means that, for any $x,y\in X$,  $R_x(y)=f(y)$ is equivalent to  $R_y(x)=f(x)$.
\end{remark}


\begin{definition}
Let $(X_1, *_1, f_1)$ and $(X_2, *_2, f_2)$ be two $f$-racks (resp. $f$-quandles). A map $\phi : X_1 \to X_2$ is a $f$-rack (resp. $f$-quandle) morphism if it satisfies $\phi (a *_1 b) = \phi(a) *_2 \phi(b)$ and $\phi \circ f_1 = f_2 \circ \phi$.
\end{definition}

\begin{remark}
A category of $f$-quandle is a category whose objects are tuples $(A, *, f)$ which are $f$-quandles and morphism are $f$-quandle morphisms.
\end{remark}

\begin{example}\label{Examples}
Examples of $f$-quandles include the following:

  \begin{enumerate}
  \setlength{\itemsep}{-3pt}

\item
 Given any set  $X$ and map $f: X \to X$, then the operation  $ x * y = f(x)$ for any $ x, y \in X$ gives a $f$-quandle. We call this a \textit{trivial} $f$-quandle structure on $X$.\\

\item
For any group $G$ and any group endomorphism $f$ of $G$, the operation $x *  y=y^{-1} x f(y) $ defines a $f$-quandle structure on $G$.\\

\item

Consider the Dihedral quandle $R_n$, where $ n \geq 2$, and let $f$ be an automorphism of $R_n$.  Then $f$ is given by $f(x)=ax+b$, for some invertible element $a \in \mathbb{Z}_n $ and some $ b \in \mathbb{Z}_n$ \cite{EMR}.  The binary operation
$x  *  y =f(2y-x)= 2ay- ax +b  \pmod{n}$ gives a $f$-quandle structure called the  {\it  $f$-Dihedral quandle}.\\
\item
Any ${\Z }[T^{\pm 1}, S]$-module $M$
is a $f$-quandle with
$x * y=Tx+Sy$, $x,y \in M$ with $ TS = ST$ and $f(x)=(S+T)x$, called an {\it  Alexander  $f$-quandle}.
  \end{enumerate}
\end{example}

\begin{remark}
Axioms (\ref{twistedCon-I}) and (\ref{twistedCon-III}) of Definition \ref{twistdef} give the following equation,  $(x * y) * ( z * z)  =  (x*z) * (y * z) $.  We note that the two medial terms in this equation are swapped (resembling the mediality condition of a quandle).  Note also that the mediality in the general context may not be satisfied for $f$-quandles. For example one can check that the $f$-quandle given in item (2) of Example \ref{Examples} is not medial.
\end{remark}

\begin{proposition}
If $(X, *, f)$ is a $f$-quandle, then right multiplication  $R_{y}(x)=x*y$ is a bijection.
\end{proposition}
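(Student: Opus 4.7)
The plan is to extract bijectivity of $R_y$ directly from axiom (\ref{twistedCon-II}) of Definition \ref{twistdef}, which is the only axiom that controls preimages under the right translation. Concretely, I would fix $y \in X$ and verify injectivity and surjectivity of $R_y\colon X \to X$ separately.

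For injectivity, I would assume $R_y(a) = R_y(b)$, i.e.\ $a * y = b * y$, and call this common value $w$. Writing $w = f(x)$ for some $x \in X$, axiom (\ref{twistedCon-II}) provides a \emph{unique} $z$ with $z * y = f(x) = w$; since both $a$ and $b$ satisfy this equation, the uniqueness clause forces $a = b$. For surjectivity, given any $w \in X$, I would again write $w = f(x)$ and invoke the existence part of axiom (\ref{twistedCon-II}) to produce $z$ with $R_y(z) = z*y = f(x) = w$. Together these give that $R_y$ is a bijection.

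The main subtle point is the step ``write $w = f(x)$,'' which tacitly requires $f$ to be surjective (indeed, the axiom as stated only guarantees bijectivity of $R_y$ onto $f(X)$). This is the natural analogue in the Hom-algebra framework of requiring the twisting map to be bijective, and it is consistent with the examples (e.g.\ the trivial $f$-quandle $x*y=f(x)$ is well-defined as an $f$-rack only when $f$ is injective, and $R_y=f$ is a bijection exactly when $f$ is). I would either state this as an implicit hypothesis on $f$ or, if the authors intend to prove it, first derive surjectivity of $f$ from axioms (\ref{twistedCon-I})--(\ref{twistedCon-III}) by a short argument using $f(x) = x*x$ and the existence of solutions in axiom (\ref{twistedCon-II}).

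Modulo this point, the proof is essentially a direct reading of axiom (\ref{twistedCon-II}), and I do not expect any computational obstacle; axiom (\ref{twistedCon-I}) (the twisted self-distributivity) and (\ref{twistedCon-III}) (the idempotency $x*x=f(x)$) are not needed for the argument, though the latter is what pins down where $w$ must live.
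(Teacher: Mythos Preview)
Your injectivity argument has a real gap, and it is not the same gap you flagged. You assume that the common value $w=a*y=b*y$ can be written as $f(x)$, but axiom~(\ref{twistedCon-II}) only controls preimages of elements of $f(X)$, and in general $\operatorname{im}(R_y)\not\subseteq f(X)$. For instance, in the Alexander $f$-quandle $M=\mathbb Z$ with $x*y=x+2y$ and $f(x)=3x$, one has $0*1=2\notin 3\mathbb Z=f(M)$, so your uniqueness step cannot be invoked. Thus your proposed injectivity proof does not go through even when $R_y$ \emph{is} a bijection, and your closing remark that axioms~(\ref{twistedCon-I}) and~(\ref{twistedCon-III}) ``are not needed'' is precisely backwards.

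The paper takes a genuinely different route that avoids this problem. From (\ref{twistedCon-I}) and (\ref{twistedCon-III}) one first observes that $f$ is a morphism ($f(x)*f(z)=(x*x)*f(z)=(x*z)*(x*z)=f(x*z)$). Then, assuming $a*b=c*b$, one gets $f(a)*f(b)=f(a*b)=f(c*b)=f(c)*f(b)$; since this common value visibly lies in $f(X)$, axiom~(\ref{twistedCon-II}) now legitimately gives $f(a)=f(c)$. Next, axiom~(\ref{twistedCon-I}) yields $(a*c)*f(b)=(a*b)*(c*b)=f(c*b)=f(c)*f(b)$, and again the right-hand side is in $f(X)$, so $a*c=f(c)=c*c$; one more application of the uniqueness in~(\ref{twistedCon-II}) (with target $f(c)$) forces $a=c$. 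The whole point is to manufacture equations whose right-hand sides are \emph{known} to lie in $f(X)$ before appealing to~(\ref{twistedCon-II}); this is what the self-distributivity and idempotency axioms buy.

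On surjectivity you are right to be suspicious: the paper's proof establishes only injectivity. For finite $X$ this suffices; for infinite $X$ the statement can fail (e.g.\ the trivial $f$-quandle on $\mathbb N$ with $f(n)=n+1$). But the injectivity part, which is the substantive content, does not require any extra hypothesis on $f$.
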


\begin{proof}
Assume $ a*b = c*b$ where $ a \neq c$. Then $ f(a) * f(b) = f(a * b) = f(c) * f(b) \implies f(a) = f(c) $. Consider $(a*c) * f(b)$.
 \begin{align*}
		(a*c) * f(b)
			&= (a*b) * (c*b)    \text{ by axiom (\ref{twistedCon-I}) of Definition \ref{twistdef}}\\
			&= f(c*b)          \text{ since $a*b = c*b$}\\
			&= f(c) * f(b).			
	\end{align*}
So $ a*c = f(c) = c*c \implies a=c$ which is a contradiction.
\end{proof}
In the following, we provide a key construction providing a new $f$-quandle from a $f$-quandle and a $f$-quandle morphism. This gives a way to produce examples.
\begin{proposition}\label{twistprop}
Let $(X,*,f)$ be a finite $f$-quandle and $\phi:X\rightarrow X$ be a $f$-quandle morphism: ($\phi(x*y)= \phi(x)*\phi(y)$).
Then $(X,*_{\phi},f_{\phi})$ is a $f$-quandle with $a*_{\phi}b=\phi(a*b)$ and $f_{\phi}(a)=\phi(f(a))$ if and only if $\phi$ is an automorphism. Moreover, if $\phi$ is an element of the centralizer of the automorphism group of $X$, then the associated $f$-quandle has the same $f$ map.
\end{proposition}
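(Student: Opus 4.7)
The plan is to verify the three $f$-quandle axioms for the twisted triple $(X, *_\phi, f_\phi)$ by translating each through the defining relations $a *_\phi b = \phi(a*b)$ and $f_\phi = \phi \circ f$, and pinpointing exactly which axiom forces $\phi$ to be bijective. Since axioms (\ref{twistedCon-I}) and (\ref{twistedCon-III}) reduce cleanly to the originals via the multiplicative morphism property, the content of the iff will be entirely concentrated in axiom (\ref{twistedCon-II}).

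For the sufficient direction, axiom (\ref{twistedCon-III}) is immediate: $x *_\phi x = \phi(x*x) = \phi(f(x)) = f_\phi(x)$. For axiom (\ref{twistedCon-I}), both $(x *_\phi y) *_\phi f_\phi(z)$ and $(x *_\phi z) *_\phi (y *_\phi z)$ unfold, after one application of $\phi(a*b) = \phi(a)*\phi(b)$, to $\phi^2$ applied to the two sides of the original twisted identity, so they agree. For axiom (\ref{twistedCon-II}), the equation $z *_\phi y = f_\phi(x)$ reads $\phi(z*y) = \phi(f(x))$, which for bijective $\phi$ is equivalent to $z*y = f(x)$, with a unique $z$ coming from the rack axiom for $(X, *, f)$.

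For the necessary direction, I would invoke the preceding proposition that right translation on any $f$-quandle is a bijection. Writing $R_y^\phi(z) := z *_\phi y = \phi(R_y(z))$, both $R_y^\phi$ (from the hypothesized $f$-quandle $(X, *_\phi, f_\phi)$) and $R_y$ (from $(X, *, f)$) are bijections, so $\phi = R_y^\phi \circ R_y^{-1}$ is a bijection of $X$. Combined with the multiplicative hypothesis, this promotes $\phi$ to an automorphism. The principal obstacle in the whole argument is axiom (\ref{twistedCon-II}): it is the only axiom sensitive to bijectivity, and it controls both directions of the iff.

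For the moreover part, assume $\phi$ lies in the centralizer of $\mathrm{Aut}(X)$. I would first note that $f$ is itself an endomorphism of $(X,*)$: instantiating axiom (\ref{twistedCon-I}) with $x=y=a$ and $z=b$ together with axiom (\ref{twistedCon-III}) yields $f(a)*f(b) = (a*a)*f(b) = (a*b)*(a*b) = f(a*b)$. When $f$ is additionally bijective, as in the standard examples, it belongs to $\mathrm{Aut}(X)$, and the centralizer condition forces $\phi \circ f = f \circ \phi$. Hence $f_\phi = \phi \circ f = f \circ \phi$, so the original $f$ still governs the twisted structure map on either side of $\phi$, justifying the claim that the associated $f$-quandle retains the same $f$.
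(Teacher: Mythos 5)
Your proof of the main equivalence is correct. The sufficiency direction is essentially the paper's argument: axiom (\ref{twistedCon-III}) is immediate, axiom (\ref{twistedCon-I}) reduces via multiplicativity of $\phi$ to $\phi^2$ applied to the original identity, and axiom (\ref{twistedCon-II}) transfers through injectivity of $\phi$ (you even handle uniqueness, which the paper's proof leaves implicit). The necessity direction is where you genuinely diverge: the paper fixes $a,b$ with $\phi(a)=\phi(b)$, chooses $c,d$ with $a*c=f(d)$ (e.g.\ $c=d=a$), uses the uniqueness clause of axiom (\ref{twistedCon-II}) in the twisted structure to force $a=b$, and then invokes finiteness to upgrade the injective morphism to an automorphism. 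You instead write $R_y^\phi=\phi\circ R_y$ and conclude $\phi=R_y^\phi\circ R_y^{-1}$ is a bijection from the preceding proposition that right translations of a $f$-quandle are bijective, applied to both $(X,*,f)$ and the hypothesized $(X,*_\phi,f_\phi)$. This is a legitimate and arguably cleaner route within the paper's framework, and it makes the role of finiteness visible only through that earlier proposition (whose proof of surjectivity is itself where finiteness matters), whereas the paper's argument is self-contained, using only the uniqueness in axiom (\ref{twistedCon-II}) plus finiteness.

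The ``moreover'' clause is the one place with a genuine gap, though the paper's own proof is silent on it as well. Your derivation that $f$ is multiplicative (from axioms (\ref{twistedCon-I}) and (\ref{twistedCon-III}) with $x=y$) is correct and worth keeping, but you then insert an extra hypothesis not in the statement --- bijectivity of $f$ --- in order to place $f$ in the automorphism group and extract $\phi\circ f=f\circ\phi$ from the centralizer condition. Even granting that, what you obtain is $f_\phi=\phi\circ f=f\circ\phi$, which is not the literal claim that the twisted quandle ``has the same $f$ map'' (that would require $\phi$ to fix the image of $f$ pointwise); your closing sentence papers over this rather than proving it. So for the moreover part you should either state precisely what is being claimed (e.g.\ that under the centralizer hypothesis $f_\phi$ commutes with the twist, $f_\phi=f\circ\phi$) or flag that the assertion as written does not follow from commutativity alone.
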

\noindent We will refer to $(X,*_{\phi},f_{\phi})$ as a { \it twist} of $(X,*,f)$.

\begin{proof}$ \Leftarrow) $  Assume that $\phi$ is an automorphim.  Let $a,b,c\in X$
\begin{eqnarray*}
(a *_\phi b)*_\phi f_\phi (c)&=& (a *_\phi b) *_\phi \phi(f(c)) = \phi(a* b)*_\phi \phi(f(c))\\
&=&\phi [(a*b) *_\phi f(c)]
= \phi^2((a* c)* (b* c))\\
&=& \phi[\phi(a* c)* \phi((b* c))] = \phi[(a *_\phi c) * ( b *_\phi c)]\\
&=& (a*_\phi c)*_\phi (b *_\phi c).
\end{eqnarray*}
Obviously $a*  a=f(a)$ implies $f_\phi(a) = \phi(f(a)) = \phi(a*a) = a *_\phi a$.  Now given $a,b\in X$ there exists $c\in X$ such that $c * b=f(a)$, therefore  $c *_\phi  b=\phi(c* b)=\phi(f(a))=f_\phi (a)$.\\

$\Rightarrow)$ Assume that $(X,*_{\phi},f_{\phi})$ is a $f$-quandle with $a*_{\phi}b=\phi(a*b)$ and $f_{\phi}(a)=\phi(f(a))$.\\
We need to prove that $\phi(a) = \phi(b)$ implies $a = b$.\\
Assume $ \phi(a) = \phi(b)$. Let $c, d$ be such that $ a * c = f(d)$.
\begin{eqnarray*}
&& \phi(a * c) = \phi(f(d))  \implies \phi(a) * \phi(c) = \phi(f(d)) \implies a *_\phi c = f_\phi (d).\\
&& \phi(b) * \phi(c) = f_\phi(d) \implies b*_\phi c = f_\phi (d).
\end{eqnarray*}
Hence $a = b$ . \\
Since $X$ is finite and $\phi$ is an injective homomorphism then $\phi$ is an automorphism.

\end{proof}

\begin{remark} Notice that a quandle (resp. rack,  shelf) may be viewed as  a $f$-quandle (resp. $f$-rack, $f$-shelf)  for which the structure map  $f$ is the identity map.  \end{remark}

\begin{corollary}
\noindent In the particular case where $f=id_x$, the proposition shows that any usual quandle along with an appropriate morphism gives rise to a $f$-quandle.
 \end{corollary}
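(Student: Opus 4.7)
The plan is to specialize Proposition \ref{twistprop} to the case $f=\mathrm{id}_X$, since this corollary is essentially a restatement of that proposition in a degenerate setting. The main task is to identify what ``an appropriate morphism'' must mean here and to check that the hypotheses of the proposition are automatically satisfied.

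First I would observe that when $f=\mathrm{id}_X$, a $f$-quandle in the sense of Definition \ref{twistdef} is exactly the same data as a usual quandle: substituting $f=\mathrm{id}_X$ into the twisted axioms (\ref{twistedCon-I})--(\ref{twistedCon-III}) recovers the classical quandle axioms (I)--(III) stated at the beginning of Section 2. Thus any ordinary quandle $(X,\rt)$ can be fed into Proposition \ref{twistprop} as the $f$-quandle $(X,\rt,\mathrm{id}_X)$.

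Next, for any map $\phi:X\to X$ the compatibility condition $\phi\circ f=f\circ\phi$ required of a $f$-quandle morphism is automatic when $f=\mathrm{id}_X$. Hence as soon as $\phi$ is a quandle homomorphism it is automatically a $f$-quandle morphism of $(X,\rt,\mathrm{id}_X)$ into itself. Applying Proposition \ref{twistprop}, one obtains a $f$-quandle $(X,*_\phi,f_\phi)$ with $a*_\phi b=\phi(a\rt b)$ and structure map $f_\phi(a)=\phi(\mathrm{id}_X(a))=\phi(a)$. This exhibits the promised $f$-quandle built from the usual quandle $X$ and the morphism $\phi$.

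The only subtle point, and what I would flag as the ``main obstacle'' even though it is minor, is clarifying what ``appropriate'' means: Proposition \ref{twistprop} requires $\phi$ to be an \emph{automorphism} of the quandle, not merely an endomorphism, in order to guarantee the non-degeneracy axiom (\ref{twistedCon-II}) for the twisted operation. So the corollary should be read as: every pair consisting of a quandle $(X,\rt)$ and a quandle automorphism $\phi$ of $X$ produces a genuine $f$-quandle $(X,*_\phi,\phi)$ whose structure map is $\phi$ itself, which is typically non-trivial and so does not reduce back to an ordinary quandle.
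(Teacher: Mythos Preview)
Your argument is correct and is exactly the intended reading: the paper itself gives no separate proof of this corollary, treating it as an immediate specialization of Proposition~\ref{twistprop} via the preceding remark that a quandle is a $f$-quandle with $f=\mathrm{id}_X$. Your identification of ``appropriate morphism'' with a quandle automorphism, and your observation that the compatibility $\phi\circ f=f\circ\phi$ is automatic when $f=\mathrm{id}_X$, are precisely the points one needs to make explicit.
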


\begin{example}\label{S31}
Let $G$ be a non-abelian group and  $f$ a group automorphism. Then one gets an example of a $f$-quandle\ by $x*y=f(y)^{-1}f(x)f(y)$.  For example,  in the case of the symmetric group on three letters $G=S_3 = <s, t: s^2 = t^3 = e, ts = st^2>$ and $f$ be a group automorphism that maps $s \to st$, $ t \to t^2$. Then one gets an example of a $f$-quandle by $x*y = f^{-1}(y)f(x)f(y)$ given by the following Table:\\
$\begin{array}{c|ccccccc}
* & e & s & t & t^2 & st & st^2 \\
\hline
e & e & e & e & e & e & e \\
s & st & st & st^2 & s & st^2 & s\\
t & t^2 & t & t^2 & t^2 & t & t\\
t^2 & t & t^2 & t & t & t^2 & t^2\\
st & s & st^2 & st & st^2 & s & st\\
st^2 & st^2 & s & s & st & st & st^2
\end{array}$
\end{example}

\begin{remark}
If we consider in the previous example the operation defined as $x * y = y^{-1}xf(y)$, then we obtain an isomorphic $f$-quandle.
\end{remark}

\begin{example}
Recall from \cite{EMR} that any automorphism of the dihedral quandle $\mathbb{Z}_n$ is of the form $f_{a,b}(x) = ax + b$. Using the previous proposition we recover the $f$-Dihedral quandle in item 3 of Example \ref{Examples}.
\end{example}

\subsection{Constructions}
We provide some key constructions. In the following, we associate to a $f$-rack $(X,*,f)$ a $f$-crossed set structure on $S_X$, the set of permutations defined by the right multiplications. 

\begin{proposition} 
	Let $(X,*,f)$ be a $f$-rack, then $S_X$, the set of permutations defined by right multiplication $R_x$, form a $f$-crossed set, under the natural operation $R_x *_R R_y = R_{x*y}$ and the morphism $f_R(R_x)=R_{f(x)}$.
\end{proposition}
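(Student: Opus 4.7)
The plan is to verify, in order, (i) the well-definedness of $*_R$ and $f_R$ on $S_X = \{R_x : x \in X\}$, (ii) the $f$-shelf identity, (iii) the quandle identity $R_x *_R R_x = f_R(R_x)$, (iv) the $f$-rack uniqueness axiom, and (v) the crossed condition. The main computational tool is the reformulation of (\ref{twistedCon-I}) given in the remark after Definition \ref{twistdef}, namely the conjugation formula
\[
R_{b*c} \;=\; R_{f(c)} \circ R_b \circ R_c^{-1},
\]
which holds because right multiplication is a bijection.

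For well-definedness, suppose $R_x = R_{x'}$. Writing (\ref{twistedCon-I}) as $(a*b)*f(x) = (a*x)*(b*x)$ and comparing with the analogous identity at $x'$, the right hand sides coincide since $R_x = R_{x'}$, so $(a*b)*f(x) = (a*b)*f(x')$ for all $a,b$; since $a*b$ attains every value in $X$ by surjectivity of right multiplication, this forces $R_{f(x)} = R_{f(x')}$. Well-definedness of $*_R$ then follows from the conjugation formula, which expresses $R_{x*y}$ entirely in terms of $R_x$, $R_y$ and $R_{f(y)}$.

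The axioms (ii)--(iv) are then routine translations: for (ii), both $(R_x *_R R_y) *_R f_R(R_z) = R_{(x*y)*f(z)}$ and $(R_x *_R R_z) *_R (R_y *_R R_z) = R_{(x*z)*(y*z)}$ collapse via (\ref{twistedCon-I}) in $X$; for (iii), applying (\ref{twistedCon-I}) with $y=z=x$ yields $(u*x)*f(x) = (u*x)*(x*x)$, and surjectivity of $R_x$ gives $R_{f(x)} = R_{x*x}$; for (iv), existence of $R_z$ satisfying $R_{z*y} = R_{f(x)}$ is immediate from (\ref{twistedCon-II}) in $X$, while uniqueness as a permutation falls out of the conjugation formula by cancelling $R_{f(y)}$ and $R_y^{-1}$.

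Finally, for the crossed condition, I would apply the conjugation formula once more to rewrite $R_x *_R R_y = f_R(R_x)$ as $R_{f(y)} R_x = R_{f(x)} R_y$, an equation symmetric in $x$ and $y$, hence equivalent to $R_y *_R R_x = f_R(R_y)$. The only real obstacle is the well-definedness step, which hinges on the surjectivity of right multiplication; every remaining verification is a brief algebraic translation of the $f$-rack axioms on $X$.
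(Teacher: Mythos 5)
Your proof is correct and rests on the same key identity the paper uses, namely the operator form of axiom (\ref{twistedCon-I}), $R_{x*y}=R_{f(y)}\circ R_x\circ R_y^{-1}$, from which the shelf, quandle, rack and crossed-set conditions on $S_X$ all follow just as in the paper's argument. The only difference is that you additionally verify well-definedness of $*_R$ and $f_R$ on $S_X$ (and the existence/uniqueness axiom inside $S_X$) explicitly, points the paper leaves implicit in its ``sub-$f$-quandle of the conjugation structure'' phrasing, so this is a more careful write-up of essentially the same proof.
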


\begin{proof}
	First note that by the second axiom of $f$-quandles $R_x \in S_X$, and consider the $f$-quandle formed from $S_X$ by defining $a * b = f(b) a b^{-1}$ for $a,b \in S_X$. By the first axiom of $f$ quandles we have $R_{f(z)}R_y = R_{y*z}R_{z}$. Thus, $R_y *_R R_z = R_{y*z} = R_{f(z)} R_y R_z^{-1}$. The closure of the operation on this subset is clear from its definition, thus $(R_X,*_R,f_R)$ is a $f$-subquandle. Further if $R_{x*y}=R_{f(x)}$ then
	$$R_{y*x}=R_{f(x)} R_y R_x^{-1}=R_{f(y)} R_z R_y^{-1} R_y R_z^{-1}=R_{f(y)}.$$
	Thus it is a $f$-crossed set.
\end{proof}

We define in the following the concept of Enveloping groups of $f$-racks.

\begin{definition}
Let $(X, *, f) $ be a $f$-rack. Then there is a natural map $\iota$  mapping $X$ to a group, called the enveloping group of $f$-rack of $X$ , and defined  as $G_X = F(X) / <x *y = f(y)xy^{-1} , x, y \in X>$, where $F(X)$ denotes the free group generated by $X$.
\end{definition}
\noindent In the following, we discuss a functoriality property between $f$-racks and groups, see \cite{And-Grana,FR} for the classical case.
\begin{proposition}
Let $(X,*,f)$ be a $f$-rack and $G$ be a group. Given any $f$-rack homomorphism  $\varphi:X\to G_{conj}$, where $G_{conj}$ is a group together with a $f$-rack structure along a group homomorphism $g$, that is the multiplication is defined as $a*_Gb=g(b)ab^{-1}$. Then, there exists a unique group homomorphism $\widetilde{\varphi}:G_X\to G$ which makes the following diagram commutative
\[
\xymatrix{
(X,*,f) \ar[rr]^{\iota} \ar[d]_{\varphi} &&G_X \ar[d]^{\widetilde{\varphi}}  \\
(G_{conj},*_G,g) \ar[rr]_{id} && G
}
\]
\end{proposition}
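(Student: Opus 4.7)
The plan is to invoke the universal property of free groups and then check that the relations defining $G_X$ are mapped to trivial elements under the induced map. Since $G_X$ is defined as a quotient of the free group $F(X)$, the natural strategy is two-step: (i) lift $\varphi$ to $F(X)$, and (ii) verify that this lift factors through the relations.

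First I would use the universal property of the free group $F(X)$: the set-map $\varphi : X \to G$ extends uniquely to a group homomorphism $\widehat{\varphi} : F(X) \to G$. The canonical quotient map $\pi : F(X) \to G_X$ sends each $x \in X$ to $\iota(x)$, so what has to be shown is that $\widehat{\varphi}$ vanishes on the normal subgroup generated by the elements $(x*y)\cdot (f(y) x y^{-1})^{-1}$ for $x,y \in X$. Once this is established, $\widehat{\varphi}$ descends uniquely to a homomorphism $\widetilde{\varphi} : G_X \to G$, and the diagram commutes by construction, since $\widetilde{\varphi} \circ \iota = \widetilde{\varphi} \circ \pi|_X = \widehat{\varphi}|_X = \varphi$.

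The main computation — which is the only substantive step — consists in checking that $\widehat{\varphi}$ respects the defining relations. Fix $x,y \in X$. On one side, using that $\varphi$ is an $f$-rack morphism into $(G_{conj},*_G,g)$,
\begin{equation*}
\widehat{\varphi}(x*y) \;=\; \varphi(x) *_G \varphi(y) \;=\; g(\varphi(y))\,\varphi(x)\,\varphi(y)^{-1}.
\end{equation*}
On the other side, since $\varphi$ intertwines $f$ and $g$ (i.e.\ $\varphi \circ f = g \circ \varphi$),
\begin{equation*}
\widehat{\varphi}\bigl(f(y)\,x\,y^{-1}\bigr) \;=\; \varphi(f(y))\,\varphi(x)\,\varphi(y)^{-1} \;=\; g(\varphi(y))\,\varphi(x)\,\varphi(y)^{-1}.
\end{equation*}
Both sides agree, so every generator of the normal subgroup of relations lies in $\ker \widehat{\varphi}$, and hence so does the whole normal subgroup.

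For uniqueness, note that $\iota(X)$ generates $G_X$ as a group (since $F(X)$ is generated by $X$ and quotients preserve generation), so any two homomorphisms $G_X \to G$ agreeing with $\varphi$ on $\iota(X)$ must coincide. The only potential subtlety is the compatibility between the twisting map $f$ on $X$ and the homomorphism $g$ on $G$ — but the hypothesis that $\varphi$ is an $f$-rack morphism into $(G_{conj},*_G,g)$ already encodes exactly the identity $\varphi \circ f = g \circ \varphi$ needed in the calculation above, so no extra difficulty arises.
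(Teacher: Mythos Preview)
Your proof is correct and follows essentially the same approach as the paper: extend $\varphi$ to the free group $F(X)$, verify via the identities $\varphi(x*y)=\varphi(x)*_G\varphi(y)=g(\varphi(y))\varphi(x)\varphi(y)^{-1}$ and $\varphi\circ f=g\circ\varphi$ that the defining relations of $G_X$ lie in the kernel, and conclude that the map factors through $G_X$. Your write-up is in fact a bit more explicit than the paper's in invoking the universal property of $F(X)$ and in justifying uniqueness via generation of $G_X$ by $\iota(X)$.
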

\begin{proof}
Let $\bar{\varphi} : F(X)\to G$  be a $f$-rack homomorphism extension of $\varphi$ to the free group $F(X)$. Then $\bar{\varphi}(yx^{-1}f(y)^{-1}(x*y)) = 1$. Indeed,\\
\begin{eqnarray*}
\bar{\varphi}(yx^{-1}f(y)^{-1}(x*y)) &=& \varphi(y)\varphi(x^{-1})\varphi(f(y)^{-1})\varphi(x*y)\\
&=& \varphi(y)\varphi(x)^{-1}\varphi(f(y))^{-1}(\varphi(x)*_G\varphi(y))\\
&=&  \varphi(y)\varphi(x)^{-1}\varphi(f(y))^{-1}g(\varphi(y))\varphi(x)\varphi(y)^{-1}\\
&=& 1.
\end{eqnarray*}
Since $\varphi \circ f = g \circ \varphi$ ($f$-rack morphism). It follows that $\varphi$ factors through a unique homomorphism $\tilde{\varphi} : G_X \to G$ of groups. The commutativity of the diagram is straightforward.
\end{proof}

 \begin{remark}
The functor $(X, *, f) \to G_X$ is left adjoint to the forgetful functor $G \to G_{conj}$ from the category of groups to that of $f$-racks. That is,
\begin{center} 
$Hom_{groups}(G_X, G) \simeq Hom_{ f - racks}(X, G_{conj})$ 
\end{center}by natural isomorphism.
\end{remark}
	
Now, we discuss relationships between $f$-quandles and $f$-crossed sets.
We extend to the $f$ case  the construction, provided in \cite{And-Grana}, of a functor $Q$ from the category of finite  quandles with bijective maps to crossed sets. We assign to a $f$-quandle $X$ a $f$-crossed set $Q(X)$ with the expected universal property: any morphism of $f$-quandles factorized through $Q(X)$ whenever Y is a $f$-crossed set.\\
Let  $\phi : X \to Y$ be a morphism on a set $X$ with the equivalence relation $\sim$ defined by $ x \sim x^{'}$ if there exists  $ y$ such that  $x * y = f(x^{'})$ and $y * x = f(y)$. \label{I} \\Then $\sim$ coincides with the identity relation if and only if $X$ is a crossed set.\\ Let $X_1=X/ \sim $ , we see that $X_1$ inherits the structure of a $f$-quandle. Suppose $x, y , x^{'}$ as above, then $ f(y) * f(x^{'}) = (y * y) * (x*y) = (y*x) * f( y) = f(y)*f(y) =f(y*y)=f(f(y))$. Using invertibility of $f$, we have  that $y * x^{'} = f(y)$. \\ Moreover, for $z \in X$,  \begin{eqnarray*}\lefteqn{  (z * x^{'}) * f(y)}\\ && = (z*x^{'}) * (y *x^{'}) = (z*y) * f(x^{'})   \\ && =
  (z*y) * (x*y) = (z*x)*f(y).
\end{eqnarray*}
Hence, we see that $R_x = R_{x^{'}}$ and then $R_x = R_{x^{''}}$ for any $ x^{''} \sim x$. Therefore, we extend the operation to $ * : X \times (X/\sim) \to X $. Also, for any $z \in X$, $$(x * z) * (y*z) = (x*y)*f(z) = f(x^{'}) * f(z) = f(x^{'}*z)$$ and $$(y*z)*(x*z) = (y*x)*f(z) = f(y) * f(z) = f(y * z).$$ Then $(x*z) \sim (x^{'}*z)$.  So we can consider $* : (X/\sim) \times (X/\sim) \to (X/\sim)$. If $X_1$ is not a $f$-crossed set then by taking $X_2 = X_1/\sim$ and proceed the same way until arrive to a $f$-crossed set. Hence the functoriality and universal property.

\section{Classification of $f$-quandles}

In this section we give a classification of $f$-quandles of low order and discuss those $f$-quandles which can be obtained from quandles via Proposition \ref{twistprop}. First we will introduce the notion of $f$-isomorphisms and the resulting $f$-isomorphism classes.

\begin{definition}
\
Let $(X,*,f)$ and $(X_0,*_0,f_0)$ be $f$-quandles. If there exists an automorphism $\phi: X \to X$ and corresponding twisting of $(X,*,f)$, denoted $(X,*_\phi,f_\phi)$, that is isomorphic to $(X_0,*_0,f_0)$, then $(X,*,f)$ and $(X_0,*_0,f_0)$ are said to be twisted-isomorphically equivalent.
\end{definition}

\noindent It is easy to see that if $(X,*,f)$ is twisted-isomorphically equivalent to $(X_0,*_0,f_0)$ with twisting automorphism $\phi: X \to X$ and isomorphism $\psi: X \to X_0$, then there exists a $f$-quandle $(X_0,*_1,f_1)$ that is isomorphic to $(X,*,f)$ via the same isomorphism and $(X_0,*_1,f_1)$ is a twist of $(X_0,*_0,f_0)$ with the automorphism $\psi^{-1}\phi\psi$, thus the following  diagram commutes. 
\[
\xymatrix{
(X,*,f) \ar[rr]^{\phi} \ar[d]_{\psi} && (X, *_{\phi}, f_{\phi}) \ar[d]^{\psi}  \\
(X_0, *_1, f_1) \ar[rr]_{\psi^{-1}\phi \psi} && (X_0, *_0, f_0)
}
\]

From this it becomes clear that twisted-isomorphisms do define equivalence classes on the space of $f$-quandles.

\noindent This allows us to consider the classification of $f$-quandles in terms of twisted-isomorphically distinct classes. This method of classification brings us to the following remark.

\begin{remark}
Let $(X,*,f)$ be a $f$-quandle such that $f$ is a bijection, then there exists a quandle $(X,\rhd)$ such that $a*b=f(a \rhd b)$. That is, $(X,*,f)$ is a twist of $(X,\rhd,id_X)$
\end{remark}

\noindent As $f: X \to X$ is a automorphism on $(X,*,f)$, so is $f^{-1}$, and twisting $(X,*,f)$ by $f^{-1}$ gives the $f$-quandle $(X,*_2, id_X)$ which is a quandle. As such the classification of $f$-quandles with bijective maps can be found by extending the isomorphic classes of quandles to twisted isomorphic classes. As this classification is far more extensive and readily accessible, we will limit our further discussion of classification to those $f$-quandles with non-bijective $f$-maps.

\noindent Before continuing we will note two other special classes of $f$-quandle. Having classified $f$-quandles such that $f$ is a bijection, we will note an interesting property of those $f$-quandles with constant $f$-maps.

\noindent \begin{remark} If $(X, *, f)$ is a $f$-quandle, such that there exists  $a \in X$ such that $f(b) = a $ for all $b \in X$, then $(X, *, f)$ is a Latin square.\end{remark}

\begin{proof} Assume $ f(a) = 1$  for all $a$. Assume also $ a* b = a * c$  with $b \neq c$. Then \begin{align*}
		(a * b) * f(b)
			&= ( a * c) * f(b)\\
			&= ( a * b ) * ( a * c )\\
			&= f( a * c)
	\end{align*}
As $f(b) = f( a * c) = 1$ then $ ( a * b) * 1 = 1 $ by axiom II, we have $(a * b) = 1 \implies a = b$.\\
Switching $ b$ and $c$ in the above gives $ a = c \implies b = c$ which is a contradiction. Therefore left multiplication is a bijection.
By definition, $f$-quandles are right bijection, hence the proof. \quad \end{proof}

\noindent The other special class is note worthy mainly due to the ease with which they can be described. Affine $f$-quandles are those defined over $\mathbb{Z}_n$ with an operation of the form $a*b \equiv sa+tb+r$ where $s,t,r \in \mathbb{Z}_n$.

\noindent Now, we present the results of a simple program used to generate all $f$-quandles of order less than five. \\
For the following results, the structure map $f$ is defined as $f(a)=a*a$.
 \begin{theorem}
There exists one $f$-isomorphic class of order 2 that does not contain a quandle. It is affine, defined over $\mathbb{Z}_2$ with the operation $a *_2 b \equiv a+b$.
\end{theorem}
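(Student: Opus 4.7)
The plan is to enumerate every $f$-quandle structure on the two-element set $X = \{0,1\}$ and group them into $f$-isomorphism classes. By the remark preceding this theorem, an $f$-quandle whose structure map $f$ is a bijection is a twist of a genuine quandle, so its twisted-isomorphism class contains a quandle; conversely, any twist $\phi \circ f$ of a non-bijective map is again non-bijective, so the property of \emph{not} containing a quandle is preserved under twisting. Consequently I only need to analyse structures on $X$ with non-bijective $f$, and on a two-element set this forces $f$ to be constant, so either $f \equiv 0$ or $f \equiv 1$.

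Fixing $c = f(0) = f(1) \in \{0,1\}$, the next step is to read off the multiplication table from the axioms. Axiom (\ref{twistedCon-III}) forces $0 * 0 = 1 * 1 = c$, and axiom (\ref{twistedCon-II}), which requires each right translation $R_y$ to be a bijection of $X$, then forces $1 * 0 = 0 * 1 = 1 - c$. Written in $\mathbb{Z}_2$ this is $a * b \equiv a + b + c \pmod{2}$. Axiom (\ref{twistedCon-I}) is then verified by a one-line computation in $\mathbb{Z}_2$: both $(x * y) * f(z)$ and $(x * z) * (y * z)$ reduce to $x + y + c \pmod{2}$. So each choice of $c$ produces a genuine $f$-quandle.

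To finish, I would show that the two candidates lie in the same $f$-isomorphism class by exhibiting the outright isomorphism $\psi(a) = a + 1 \pmod{2}$, which satisfies $\psi(a *_{c=0} b) = \psi(a) *_{c=1} \psi(b)$ and intertwines the two constant structure maps. This collapses the two candidates into a single class whose canonical representative is the case $c = 0$, namely the affine $f$-quandle on $\mathbb{Z}_2$ with $a *_2 b \equiv a + b \pmod{2}$ and $f \equiv 0$. I expect no real obstacle since the argument is a small case analysis; the only point that demands care is using axiom (\ref{twistedCon-II}) correctly to pin down the off-diagonal entries, since otherwise one could write down other $2 \times 2$ tables satisfying (\ref{twistedCon-III}) alone and mistake them for distinct classes.
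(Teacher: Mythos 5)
Your proposal is correct, and it is more of a hand proof than the paper offers: the paper presents this theorem as the output of a computer enumeration, with the subsequent remark simply listing the four order-$2$ multiplication tables and pairing them off as twists of one another, whereas you argue structurally. Your route uses the fact that bijectivity of the structure map is invariant under twisting and under isomorphism, together with the remark preceding the theorem (every class with bijective $f$ contains a quandle), to reduce at once to constant $f\equiv c$; then axioms (\ref{twistedCon-III}) and (\ref{twistedCon-II}) force the table $a*b\equiv a+b+c$, axiom (\ref{twistedCon-I}) is a one-line check, and the explicit map $a\mapsto a+1$ shows the $c=0$ and $c=1$ structures are actually isomorphic (slightly stronger than the paper's statement that one is a twist of the other), giving exactly one class without a quandle, represented by $(\mathbb{Z}_2, a+b, f\equiv 0)$. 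What your approach buys is a self-contained verification that avoids enumerating the bijective-$f$ cases at all; what the paper's brute-force listing buys is uniformity with the order $3$ and $4$ classifications. One small point of precision: axiom (\ref{twistedCon-II}) literally asserts unique solvability of $z*y=f(x)$ rather than bijectivity of $R_y$ (the latter is a separate proposition in the paper); for constant $f\equiv c$ the axiom says that $c$ occurs exactly once in each column, which combined with $y*y=c$ already forces the off-diagonal entries to equal $1-c$, so your conclusion stands with either justification.
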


\begin{remark}
A direct calculation shows that the classification of order 2 $f$-quandles reduces to the following four cases:\\
\begin{tabular}{llrr}
\begin{tabular}{|c|c|}
\hline
1 & 2  \\
\hline
2 & 1  \\
\hline
\end{tabular}
 &
\begin{tabular}{|c|c|}
\hline
1 & 1 \\
\hline
2 & 2 \\
\hline
\end{tabular}
&
\begin{tabular}{|c|c|}
\hline
2 & 2 \\
\hline
1 & 1 \\
\hline
\end{tabular}
&
\begin{tabular}{|c|c|}
\hline
2 & 1 \\
\hline
1 & 2 \\
\hline
\end{tabular}
\end{tabular}\\

The first is the twist of the  fourth and the second is the twist of the third as covered by Proposition 2.14. \end{remark}

\noindent It is interesting to note that the $f$-quandle above is commutative, and in fact it is easy to see that all cyclic groups are in fact $f$-quandles.

\begin{theorem}
There exists no twisted isomorphic class of order 3 that do not contain a quandle. 
\end{theorem}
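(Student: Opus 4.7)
The plan is to reduce the statement to a finite case check on $3 \times 3$ Cayley tables, building on the structure that the preceding remarks already provide. The starting observation, obtained by specialising $x = y$ in axiom~(\ref{twistedCon-I}), is
\[
f(x \ast z) \;=\; f(x) \ast f(z),
\]
so $f$ is a magma endomorphism of $(X,\ast)$. The bijective-$f$ case has already been handled by the earlier remark (every $f$-quandle with bijective $f$ is the twist of a quandle by $f^{-1}$, hence lies in the twisted-isomorphism class of a quandle). I would therefore restrict to non-bijective $f$ on $X = \{1,2,3\}$, so that $|\mathrm{Im}(f)| \in \{1,2\}$, and show that in each of these two sub-cases the resulting $f$-quandle is again twist-isomorphic to a quandle.

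In the case $|\mathrm{Im}(f)| = 1$, the Latin-square remark preceding the theorem asserts that the Cayley table of $\ast$ is a Latin square whose diagonal is the single constant value $c = f(X)$. Up to relabelling there are only a handful of Latin squares of order~$3$ with prescribed constant diagonal, and each candidate must additionally satisfy the specialised axiom $(x \ast y) \ast c = (x \ast z) \ast (y \ast z)$; the further consequence of axiom~(\ref{twistedCon-I}) with $x = z$, namely $(x \ast y) \ast c = c \ast (y \ast x)$, then couples the rows to the columns very tightly. I would enumerate the surviving tables and, for each, either exhibit an $f$-quandle automorphism $\phi$ whose twist is a genuine quandle, or derive a contradiction ruling it out.

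In the remaining case $|\mathrm{Im}(f)| = 2$, after relabelling one may assume $f(1)=f(2)=c$ and $f(3)=c'$ with $c \neq c'$. The endomorphism identity $f(x \ast y) = f(x) \ast f(y)$ forces compatibilities between the restriction of $\ast$ to $\{c,c'\}$ and the preimages under $f$; combining this with the right-cancellation furnished by axiom~(\ref{twistedCon-II}) should determine the table up to isomorphism, after which a direct construction of a twisting automorphism realises it as the twist of a quandle.

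The main obstacle is the bookkeeping rather than any conceptual hurdle: for each candidate Cayley table one must either produce an explicit $f$-quandle automorphism $\phi$ whose twist lies in the isomorphism class of a quandle, or rule the table out by contradiction. Because the state space is finite---bounded by the $3^9 = 19683$ binary operations on $\{1,2,3\}$ and cut down drastically by the three $f$-quandle axioms and the endomorphism constraint on $f$---the cleanest execution mirrors the order~$2$ argument: a small computer enumeration followed by a case-by-case display of the twisting data for the surviving classes.
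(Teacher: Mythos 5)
Your overall strategy---reduce to a finite check, dispatch the bijective-$f$ case by the twisting remark, then case-split on $|\mathrm{Im}(f)|$ and enumerate---is in spirit what the paper does, since the paper gives no written proof at all: the order $\leq 4$ classifications are reported only as the output of a computer search. The genuine gap is in your non-bijective branch. You propose, for $|\mathrm{Im}(f)|\in\{1,2\}$, to show that each surviving table is \emph{twist-isomorphic to a quandle}. That is impossible on general grounds: a twist uses an automorphism $\phi$ and replaces the structure map by $f_\phi=\phi\circ f$, while an isomorphism $\psi$ replaces it by $\psi f\psi^{-1}$; both operations preserve bijectivity of the structure map, and a quandle has structure map $\mathrm{id}_X$. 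Hence every member of a twisted-isomorphism class containing a quandle has a bijective structure map, and for non-bijective $f$ the only way the theorem can hold is that \emph{no} order-$3$ $f$-quandle with non-bijective $f$ exists at all; your plan should aim at non-existence, not at exhibiting twisting data, and as written (the enumeration is not actually carried out) it does not yet prove anything.

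Worse, that non-existence fails, so the enumeration you describe cannot close in the way you expect. Take $X=\mathbb{Z}_3$ with $x*y=x+2y$ and $f\equiv 0$, i.e.\ the Alexander $f$-quandle with $T=1$, $S=2$ that the paper itself uses in the cohomology section. All three axioms hold: $(x*y)*f(z)=x+2y=(x*z)*(y*z)$, the equation $z*y=f(x)=0$ has the unique solution $z=y$, and $x*x=3x=0=f(x)$. Its structure map is constant, so by the invariance argument above its twisted-isomorphism class contains no quandle. Thus your $|\mathrm{Im}(f)|=1$ branch produces a surviving table that can neither be twisted to a quandle nor ruled out by contradiction, and the proof as planned cannot be completed; any honest execution of your case check would instead surface a direct tension between this example and the statement being proved, which is a problem with the statement (or with the intended meaning of ``contains a quandle'') rather than mere bookkeeping.
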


\noindent It is in our classification of order 4 where we encounter the smallest examples of $f$-quandles that are neither twists of ordinary quandles nor cyclic groups.

\begin{theorem}
There exist 4 $f$-isomorphic classes of order 4 what do not contain a quandle. Two of which are isomorphic to the additive groups $\mathbb{Z}_4$ and $\mathbb{Z}_2 \times \mathbb{Z}_2$. Representatives of the the remaining two classes are given by the following Cayley tables.\\

\begin{tabular}{lr}
\begin{tabular}{|c|c|c|c|}
\hline
1 & 2 & 4 & 3 \\
\hline
2 & 1 & 3 & 4 \\
\hline
3  & 4 & 1 &  2 \\
\hline
4 & 3 & 2 & 1\\
\hline
\end{tabular}
 &
\begin{tabular}{|c|c|c|c|}
\hline
1 & 2 & 4 & 3 \\
\hline
2 & 1 & 3  & 4 \\
\hline
3 & 4 & 2 & 1 \\
\hline
4 & 3 &  1 & 2\\
\hline
\end{tabular}
\end{tabular}

\end{theorem}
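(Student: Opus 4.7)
The plan is to carry out an exhaustive computer-assisted enumeration of $4\times 4$ Cayley tables on the set $\{1,2,3,4\}$, defining $f$ by $f(a) = a*a$ and retaining only those tables that satisfy the $f$-quandle axioms. Axiom (III) is automatic from the definition of $f$; axiom (II) amounts to checking that every column of the table is a permutation of $\{1,2,3,4\}$; and axiom (I) is a finite check on the $64$ triples $(x,y,z)$. After that, invoke the remark immediately preceding the theorem: any $f$-quandle with bijective $f$ is the $f$-twist (in the sense of Proposition \ref{twistprop}) of the ordinary quandle whose operation sends $(a,b)$ to $f^{-1}(a*b)$, so its twisted-isomorphism class already contains a quandle and may be discarded. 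Thus only tables whose diagonal map $a \mapsto a*a$ fails to be a bijection need to be considered.

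Next, partition the surviving tables into twisted-isomorphism classes. By the criterion spelled out just before the theorem, $(X,*,f)$ and $(X_0,*_0,f_0)$ are twisted-isomorphic precisely when there is an automorphism $\phi$ of $(X,*,f)$ and a bijection $\psi : X \to X_0$ with $\psi(\phi(a*b)) = \psi(a) *_0 \psi(b)$ for all $a,b \in X$. For $|X|=4$ this involves at most $24$ relabelings composed with at most $|\mathrm{Aut}(X,*,f)|$ twistings, which is directly tractable. Finally, check the four claimed representatives: for the abelian groups $(\Z_4,+)$ and $(\Z_2\times \Z_2,+)$ one has $(x+y)+2z = x+y+2z = (x+z)+(y+z)$ by commutativity, so axiom (I) holds with $f(a) = 2a$, and this $f$ is not bijective in either case; the two explicit Cayley tables can be verified axiom by axiom, and the $f$-maps read off from their diagonals are likewise non-bijective.

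The principal obstacle is not the enumeration itself but the reliable separation of twisted-isomorphism classes. Visually dissimilar tables can become equivalent after twisting, and a twist reshapes both the image of $f$ and the orbit structure of the right translations. The cleanest way forward is to compute twist-invariants — for example the multiset of fibre sizes of $f$, the multiset of fibre sizes of each right translation $R_a$, and the isomorphism type of the group generated by the $R_a$ — and to verify that these invariants already distinguish the four surviving classes. Any residual ambiguity between two candidates is then settled by direct comparison against the finitely many twists and relabelings, which for $|X|=4$ is only a few hundred tables to inspect, well within the reach of the simple program alluded to in the text.
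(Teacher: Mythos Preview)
Your proposal is correct and follows essentially the same approach as the paper: the paper offers no argument beyond stating that the classification was obtained by ``a simple program used to generate all $f$-quandles of order less than five,'' so your outline of the enumeration and of the twisted-isomorphism sieve is in fact more detailed than what the paper provides. One minor slip: the multiset of fibre sizes of each right translation $R_a$ is not a useful invariant here, since the paper's Proposition shows every $R_a$ is a bijection in an $f$-quandle; your other invariants together with the brute-force fallback suffice.
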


\section{Extensions of $f$-quandles and Modules}

\noindent
In this section we investigate extensions of $f$-quandles. We define generalized $f$-quandle $2$-cocycles and give examples.  We give an explicit formula relating group $2$-cocycles to $f$-quandle $2$-cocycles, when the $f$-quandle is constructed from a group.
\subsection{Extensions with dynamical cocycles and Extensions with constant cocycles}
\noindent \begin{proposition}Let $( X, *, f) $ be a $f$-quandle and $A$ be a non-empty set. Let $\alpha: X \times X \to \text{Fun}(A \times A, A)$ be a function and $g: A \to A$ a map.
Then,  
$X \times  A$ is a $f$-quandle by the operation $(x, a) * (y, b) = ( x*y, \alpha_{x,y}(a, b))$, where $x * y$ denotes the $f$-quandle product in $X$, if and only if $\alpha$ satisfies the following conditions:
\begin{enumerate}
\item $\alpha_{x,x}(a, a) = f(a)$ for all $x \in X$ and $ a \in A$;
\item $\alpha_{x, y}(-, b) : A \to A$ is a bijection for all $ x, y \in X$ and for all $ b \in A$;
\item $\alpha_{x*y, f(z)}(\alpha_{x, y}(a, b), g(c)) = \alpha_{x*z, y*z}(\alpha_{x, z}(a, c), \alpha_{y, z}(b, c))$ for all $ x, y, z \in X$ and $ a, b, c \in A$. \label{cocycle-3}
\end{enumerate}
If $(X, *, f)$ is a $f$-crossed set, then $X \times A$ is a $f$-crossed set if and only if further $\alpha_{x,y}(a,b) = f(b)$ whenever $y*x = f(y)$ and $\alpha_{y,x}(b,a) = f(a)$.\\

\noindent Such function $\alpha$ is called a \textit{dynamical $f$-quandle cocycle} or \textit{dynamical $f$-rack cocycle} (when it satisfies above conditions).

\end{proposition}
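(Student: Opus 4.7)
The plan is to verify the three defining axioms of an $f$-quandle for the product set $X\times A$ with operation $(x,a)*(y,b)=(x*y,\alpha_{x,y}(a,b))$, and to observe that each axiom decouples into its $X$-component (which is automatic, since $(X,*,f)$ is already an $f$-quandle) and its $A$-component, which matches exactly one of the listed conditions on $\alpha$. The first step is to fix the structure map on $X\times A$; conditions (1) and (3) force it to be $F(x,a)=(f(x),g(a))$, so $f$ acts on the $X$-coordinate and $g$ acts on the $A$-coordinate. Once this is declared, the correspondence between axioms and conditions is essentially tautological.

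Concretely, I would treat the three axioms in turn. For axiom (\ref{twistedCon-III}), expanding $((x,a)*(y,b))*F(z,c)$ and $((x,a)*(z,c))*((y,b)*(z,c))$ and comparing coordinates, the $X$-coordinates agree by the twisted self-distributivity on $X$, while equality of the $A$-coordinates is exactly condition (3). For axiom (\ref{twistedCon-I}), the equation $(x,a)*(x,a)=F(x,a)$ gives $x*x=f(x)$ on the $X$-side, which holds in $X$, and condition (1) on the $A$-side. For axiom (\ref{twistedCon-II}), given $(x,a)$ and $(y,b)$ one seeks a unique $(z,c)$ with $(z,c)*(y,b)=F(x,a)$; on the $X$-side this amounts to a unique $z$ with $z*y=f(x)$, which exists by the $f$-rack axiom on $X$, and then on the $A$-side one requires a unique $c$ with $\alpha_{z,y}(c,b)=g(a)$ for every $a$, which is precisely the bijectivity of $\alpha_{z,y}(-,b)$ asserted in condition (2). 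Since each implication splits coordinate-wise, both directions of the biconditional follow simultaneously.

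For the $f$-crossed set addendum, one must verify the equivalence $(x,a)*(y,b)=F(x,a) \Leftrightarrow (y,b)*(x,a)=F(y,b)$ on $X\times A$. Expanding, the left-hand side splits as $x*y=f(x)$ together with $\alpha_{x,y}(a,b)=g(a)$, and symmetrically on the right. Because $X$ is already a $f$-crossed set, the $X$-parts are equivalent for free, leaving exactly the stated compatibility on $\alpha$ as the remaining content. The main obstacle in all of this is purely organizational: one must keep track of the two distinct structure maps $f$ and $g$ without conflating them, and must recognize that condition (2) is needed precisely to promote the unique solvability on the $X$-factor into unique solvability for the full pair $(z,c)$ in the product. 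Beyond that, the proof is a direct matching of axioms to the three conditions.
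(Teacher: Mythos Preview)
The paper does not actually supply a proof of this proposition; it merely states the result and then remarks that the construction generalizes the classical one of Andruskiewitsch--Gra\~na \cite{And-Grana}. Your coordinate-wise verification---declaring the structure map on $X\times A$ to be $F(x,a)=(f(x),g(a))$ and then matching each $f$-quandle axiom against its $A$-component---is exactly the natural argument one would expect, and it is correct in substance.

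Two small points are worth noting. First, you have interchanged the labels (\ref{twistedCon-I}) and (\ref{twistedCon-III}): in the paper (\ref{twistedCon-I}) is the twisted self-distributivity identity and (\ref{twistedCon-III}) is the idempotency $x*x=f(x)$, whereas you cite them the other way around. The mathematics is unaffected, but the cross-references should be swapped. Second, in the ``only if'' direction for condition (2), axiom (\ref{twistedCon-II}) on $X\times A$ tells you that for every $a\in A$ there is a unique $c$ with $\alpha_{z,y}(c,b)=g(a)$; strictly speaking this yields injectivity of $\alpha_{z,y}(-,b)$ together with surjectivity onto $g(A)$, which is literally full bijectivity only when $g$ is onto. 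Your phrase ``precisely the bijectivity'' therefore slightly overstates the conclusion. This is more a wrinkle in the proposition as formulated (note also that the paper writes $f(a)$ in condition (1) where $g(a)$ is clearly intended) than a defect in your reasoning.
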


\noindent The $f$-quandle constructed above is denoted by $X \times_{\alpha} A$, and it  is called \textit{extension} of $X$ by a dynamical cocycle $\alpha$.  The construction is general, as Andruskiewitch and Gra$\tilde{n}$a showed in \cite{And-Grana}.\\

\noindent Assume $(X, *, f)$ is a $f$-quandle and $\alpha$  be a dynamical cocycle. For $x \in X$, define $a *_x b := \alpha_{x,x}(a, b)$. Then it is easy to see that $(A, *_x)$ is a $f$-quandle for all $x \in X$.

\begin{remark}
When $x = y$ on (\ref{cocycle-3})  above, we get \\
\noindent $\alpha_{f(x), f(z)}(\alpha_{x, x}(a, b), g(c)) = \alpha_{x*z, x*z}(\alpha_{x, z}(a, c), \alpha_{x, z}(b, c)) = \alpha_{x,z}(a,c) *_{x*z} \alpha_{x,z}(b,c)$ \\for all  $ a, b, c \in A$. When $f, g = id$, then it reduces to classical case where $\alpha_{x,z}(a):(A, *_x)\to(A, *_{x*z})$ is an isomorphism.
\end{remark}

Now, we discuss Extensions with constant cocycles.
Let $(X, *, f)$ be a $f$-rack and $\lambda: X \times X \to S_A$ where $S_A$ is group of permutations of $X$.\\
If $\lambda_{x*y, f(z)} \lambda_{x, y} = \lambda_{x*z, y*z}\lambda_{x, z}$ we say $\lambda$ is a \textit{constant $f$-rack cocycle}.\\
If $(X, *, f)$ is a $f$-quandle and further satisfies $\lambda_{x,x} = id$ for all $x \in X$ , then we say $\lambda$ is a \textit{constant $f$-quandle cocycle}.\\
If $(X, *, f)$ is a $f$-crossed set, we say that $\lambda$ is a \textit{constant $f$-crossed set cocycle} if it further satisfies $\lambda_{x,y} = id $ whenever $y*x = f(y) $ and $\lambda_{y,x}(b) = f(b)$ for some  $b \in A$.
\subsection{Modules over $f$-rack}
\begin{definition}
Let $(X, *, f)$ be a $f$-rack, $A$ be an abelian group, and $g : X \to X$ be a homomorphism. A structure of $X$-module on $A$ consists of a family of automorphisms  $(\eta_{ij})_{i, j \in X}$ and a  family of  endmorphisms $(\tau_{ij})_{i, j \in X}$ of  $A$ satisfying the following conditions:\\
\begin{eqnarray}
&& \eta_{x*y, f(z)} \eta_{x, y} = \eta_{x*z, y*z} \eta_{x, z} \label{mod-con-I}\\
&& \eta_{x*y, f(z)} \tau_{x, y} = \tau_{x*z, y*z }\eta_{y, z} \label{mod-con-II}\\
&& \tau_{x*y, f(z)}g = \eta_{x*z, y*z} \tau_{x, z} + \tau_{x*z, y*z} \tau_{y, z} \label{mod-con-III}
\end{eqnarray}
\end{definition}

\begin{remark}
If $X$ is a $f$-quandle, a $f$-quandle structure of $X$-module on $A$ is a structure of an $X$-module further satisfies $ \tau_{f(x),f(x)} g = (\eta_{f(x),f(x)} + \tau_{f(x),f(x)} ) \tau_{x,x}$.\\
Furthermore, if $f, g =id$ maps, then it satisfies $ \eta_{x,x} + \tau_{x,x} = id $.
\end{remark}

\begin{remark}
When $ x=y$ in \eqref{mod-con-I}, we get $\eta_{f(x),f(z)} \eta_{x,x} = \eta_{x*z, x*z} \eta_{x,z}.$
\end{remark}


\begin{example} Let $A$ be a non-empty set and
$(X, f)$ be a $f$-quandle, and $\kappa$ be a generalized $2$-cocycle. For $a, b \in A$, let\\
\begin{center}
$ \alpha_{x, y}(a, b) = \eta_{x, y}(a) + \tau_{x, y}(b) + \kappa_{x, y}$.
\end{center}
Then, it can be verified directly that $\alpha$ is a dynamical cocycle and the following relations hold:
\begin{eqnarray}
 &&\eta_{x*y, f(z)} \eta_{x, y} = \eta_{x*z, y*z} \eta_{x, z}\\
 &&\eta_{x*y, f(z)} \tau_{x, y} = \tau_{x*z, y*z }\eta_{y, z}\\
 &&\tau_{x*y, f(z)}g = \eta_{x*z, y*z} \tau_{x, z} + \tau_{x*z, y*z} \tau_{y, z}\\
 &&\eta_{x*y, f(z)} \kappa_{x, y} + \kappa_{x*y, f(z)} = \eta_{x*z, y*z} \kappa_{x, z} + \tau_{x*z, y*z} \kappa_{y, z} + \kappa_{x*z, y*z}.\label{$2$-cocycle}
 \end{eqnarray}
\end{example}

\begin{definition}When $\kappa$ further satisfies $\kappa_{z,z} =0 $ in (\ref{$2$-cocycle}) for any $z \in X$, we call it a \textit{generalized $f$-quandle $2$-cocycle}.\end{definition}


\begin{example} \label{etaidtauzero}
Let $(X,*_f, f)$ be a $f$-quandle and $A$ be an abelian group.\\ Set $\eta_{x, y} = id, \tau_{x, y} = 0, \kappa_{x, y} = \phi(x, y) $.\\
Then $ \phi(x,y) + \phi(x *_f y, f(z)) = \phi(x,z) + \phi(x *_f z, y *_f z)$.
\end{example}

\begin{example} Let $\Gamma = \mathbb{Z}[T,S]$ denote the ring of Laurent polynomials. Then any $\Gamma$-module $M$ is a $\mathbb{Z}(X)$-module for any $f$-quandle $(X, f)$ by $\eta_{x, y}(a) = Ta$ and $\tau_{x, y}(b) = Sb$ for any $x, y \in X$.
\end{example}

\begin{example} For any $f$-quandle $(X, f)$, the group $G_X =< x \in X | x * y = y^{-1}xf(y) >$ with  $\eta_{x, y}(a) = ya$ and $\tau_{x, y}(b) = ( 1 - x*y)(b)$ where $x, y \in X$ and $a, b \in G$, is an $X$-module.
\end{example}

%
%

\begin{example}
Here we provide an example of a $f$-quandle module and explicit formula of the $f$-quandle $2$-cocycle obtained from a group $2$-cocycle.
Let $G$ be a group and let $ 0 \to A \to E \to G \to 1$ be a short exact sequence of groups where $E = A \rtimes_{\theta}   G$ by a group $2$-cocycle $\theta$ and $A$ is an Abelian group.\\
The multiplication rule in $E$ given by $(a, x) \cdot (b, y) = ( a + x \cdot b + \theta(x, y), xy)$, where $x \cdot b$ means the action of $A$ on $G$. Recall that the group $2$-cocycle condition is $\theta(x, y) + \theta(xy, z) = x\theta(y, z) + \theta(x, yz)$.  Now, let $X=G$ be a $f$-quandle with the operation $ x * y = y^{-1}xf(y)$ and let $g: A \rightarrow A$ be a map on $A$ so that we have a map $F:E \rightarrow E$  given by $F(a,x)=(g(a), f(x))$.  Therefore the group $E$ becomes a $f$ quandle with the operation $ (a,x) * (b,y) = (b,y)^{-1} (a,x) F(b,y)$.  Explicit computations give that
 $\eta_{x, y}(a) = y^{-1}a$, $\tau_{x, y}(b) = y^{-1}xf(b) - y^{-1}b$ and $\kappa_{x, y} = -\theta(y^{-1}, y) +  \theta(y^{-1},x) + \theta(y^{-1}x, g(y))$.

\end{example}

\section{Cohomology Theory of $f$-quandles}

In this section we first give the formula of the boundary map in the simplest case when $\eta_{x,y}$ is the identity map and $\tau$ is the zero map as in Example \ref{etaidtauzero} and then give the formula in the case when $\eta_{x,y}$ is the multiplication by $T$ and $\tau$ is the multiplication by $S$ as in item 4 of Example \ref{Examples}.\\

\noindent Let $(X, *, f)$ be a $f$-rack where $f: X \to X$ is a $f$-rack morphism. We will define the most generalized cohomology theories of $f$-racks as follows:\\
For a sequence of elements $(x_1, x_2, x_3, x_4,\dots, x_n) \in X^n$  define\\

$  [x_1, x_2, x_3, x_4,\dots, x_n] =  (( \dots (x_1 * x_2) * f(x_3)) * f^2(x_4))* \dots ) *  f^{n-2}(x_n)$.\\

\noindent Notice that for $ i <n$ we have \\

 $ [x_1, x_2, x_3, x_4,\dots, x_n] = [x_1,\dots, \hat{x}_i,\dots, x_n] * f^{i-2}[x_i,\dots, x_n]$\\

\noindent This relation is obtained by applying the first axiom of $f$-quandles $n-i$ times, first grouping the first $i-1$ terms together, then iterating this process, again grouping and iterating each.


Let $(X,*,f)$ be an $f$-quandle.  Consider the free left $\mathbb Z(X)$-module $C_n(X):= \mathbb Z(X)X^n $ with basis $X^n$.  For an abelian group $A$, denote $C^n(X,A):= Hom_{\mathbb Z(X)}(C_n(X) ,A)$. 
\noindent The following theorem provides cohomology complex for  $f$-quandle.

\begin{theorem} The following family of operators $ \delta^n : C^n(X) \to C^{n+1}(X)$ defines a cohomology complex. \\
\begin{eqnarray*}
\lefteqn{
\delta^n \phi (x_1, \dots, x_{n+1}) } \nonumber \\ && =
\sum_{i=2}^{n+1} (-1)^{i}    \eta_{[x_1, \dots, \hat{x}_i, \dots, x_{n+1}],f^{\{i-2\}}[x_i, \dots, x_{n+1}]} \phi(x_1, \dots, \hat{x}_i, \dots, x_{n+1})
\nonumber \\
&&
- \sum_{i=2}^{n+1} (-1)^{i}   \phi  (x_1 \ast x_i, x_2 \ast  x_i, \dots, x_{i-1}\ast  x_i,f( x_{i+1}), \dots, f(x_{n+1}))
\nonumber\\
&&
+ (-1)^{n+1}  \tau_{[x_1, x_3, \dots, x_{n+1}],[x_2, \dots, x_{n+1}]} \phi (x_2, \dots, x_{n+1}).
\end{eqnarray*}

\end{theorem}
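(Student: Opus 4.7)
The plan is to verify $\delta^{n+1}\circ\delta^n = 0$ by direct expansion. I decompose $\delta^n = A^n - B^n + (-1)^{n+1}C^n$, where $A^n\phi$ is the sum with $\eta$-coefficients, $B^n\phi$ is the action/deletion sum, and $C^n\phi = \tau_{[x_1,x_3,\dots],[x_2,\dots]}\,\phi(x_2,\dots,x_{n+1})$ is the single $\tau$-term, which plays the role of a hidden ``$i=1$'' summand. Expanding $\delta^{n+1}\delta^n\phi$ produces nine double sums, each indexed by a pair of positions $(i,j)$ (with degenerate indexing for the $C$-factors). After splitting into the $i<j$ and $i>j$ ranges so matching terms can be compared, the aim is to collect the nine pieces into five cancellation batches, each handled by exactly one of the axioms.

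The combinatorial core is a bracket lemma extending the identity already noted in the excerpt. Iterating the first $f$-quandle axiom \eqref{twistedCon-I} one shows, for $i<j$,
\[
[x_1,\dots,x_n] \;=\; \bigl([x_1,\dots,\widehat{x}_i,\dots,\widehat{x}_j,\dots,x_n] \ast f^{i-2}[x_i,\dots,\widehat{x}_j,\dots,x_n]\bigr) \ast f^{j-2}[x_j,\dots,x_n],
\]
together with its companion obtained by pulling the two indices out in the opposite order. These identities realign the bracket subscripts of $\eta$ and $\tau$ created by the two successive applications of $\delta$, and produce the matching $f^{k}$-powers required before the module relations \eqref{mod-con-I}--\eqref{mod-con-III} can be applied literally.

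The five cancellations I would then exhibit are: $A^{n+1}A^n$ cancels by \eqref{mod-con-I} applied to the matched $(i,j)$ and $(j,i)$ pairs; the crossterms $A^{n+1}B^n + B^{n+1}A^n$ cancel by \eqref{mod-con-I} combined with the bracket lemma, which identifies the $\eta$-subscript produced after a deletion step with the one produced before; $B^{n+1}B^n$ cancels using only the first $f$-quandle axiom \eqref{twistedCon-I} (the classical quandle-cohomology cancellation, with the $f^{i-2}$-twists already folded into $B^n$); $A^{n+1}C^n + C^{n+1}A^n$ cancels by \eqref{mod-con-II}; and the remaining combination $B^{n+1}C^n + C^{n+1}B^n + C^{n+1}C^n$ collapses by \eqref{mod-con-III}, which is precisely the relation encoding the compatibility of $g$ on $A$ with $f$ on $X$. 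The overall pattern parallels the argument of Carter--Elhamdadi--Saito for classical quandle cohomology with module coefficients, with $f$- and $g$-twists inserted at each step.

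The main obstacle is bookkeeping: the subscripts of $\eta$ and $\tau$ at the second application of $\delta$ are themselves bracket expressions built by the first application, and the accumulated $f^{k}$-powers must line up exactly before the module axioms become applicable. Fixing the sign conventions for the double-deletion terms and proving the bracket lemma in sufficient generality is the bulk of the work; once that is in place, each of the five batches reduces to a single invocation of the corresponding axiom and $\delta^{n+1}\delta^n=0$ follows.
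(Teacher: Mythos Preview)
Your plan is essentially the paper's own argument: split $\delta^n$ into its three pieces, expand the nine cross-products in $\delta^{n+1}\delta^n$, and cancel in pairs governed by the bracket lemma together with the module axioms. Two small corrections to your axiom bookkeeping are worth noting. First, the block $A^{n+1}B^n+B^{n+1}A^n$ needs only the bracket identity, not \eqref{mod-con-I}, since each summand carries a \emph{single} $\eta$-factor and the cancellation amounts to checking that the $\eta$-subscript computed before the $B$-shift agrees with the one computed after. Second, your batches~4 and~5 do not close separately: $A^{n+1}C^n+C^{n+1}A^n$ consists of $(n+1)+n=2n+1$ terms, so after the pairwise \eqref{mod-con-II} cancellations one $\eta\tau$ term survives, and it is exactly this survivor that joins the leftover $\tau g$-term from $B^{n+1}C^n+C^{n+1}B^n$ and the single $\tau\tau$ term $C^{n+1}C^n$ to form the three-term identity \eqref{mod-con-III}. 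With that regrouping your outline coincides with the paper's proof.
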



\begin{proof}

\noindent To prove that $\delta^{n+1}\delta^n=0$, we will break the composition into pieces, using the linearity of $\eta$ and $\tau$.

\noindent First, we will show that the composition of the $i^{th}$ term of the first summand of $\delta^n$ with the $j^{th}$ term of the first summand of $\delta^{n+1}$ cancels with the $(j+1)^{th}$ term of the first summand of $\delta^n$ with the $i^{th}$ term of the first summand of $\delta^{n+1}$ for $i\leq j$. As the sign of these terms are opposite, we need only to show that the compositions are equal up to their sign.

Now, we can see that the composition of the $i^{th}$ term of the first summand of $\delta^n$ with the $j^{th}$ term of the first summand of $\delta^{n+1}$ can be rewritten as follows:
\begin{eqnarray*}
\lefteqn{
\eta_{[x_1,...,\hat{x}_i,...,x_{n+1}],f^{i-1}[x_i,...,x_{n+1}]} \eta_{[x_1,...,\hat{x}_i,...,\hat{x}_{j+1},...,x_{n+1}],f^j[x_{j+1},...,x_{n+1}]}} \nonumber \\ && =
\eta_{[x_1,...,\hat{x}_i,...,\hat{x}_{j+1},...,x_{n+1}]*f^{j-1}[x_j,...,x_{n+1}], f^{i-1}[x_i,...,\hat{x}_{j+1},...,x_{n+1}]*f^{i-1}f^{j-i+1}[x_{j+1},...,x_{n+1}]} \nonumber \\ &&
\eta_{[x_1,...,\hat{x}_i,...,\hat{x}_{j+1},...,x_{n+1}],f^{j-1}[x_{j+1},...,x_{n+1}]} \\ &&=
\eta_{[x_1,...,\hat{x}_i,...,\hat{x}_{j+1},...,x_{n+1}]*f^{i-1}[x_i,...,\hat{x}_{j+1},...,x_{n+1}],f^j[x_{j+1},...,x_{n+1}]}\\&&
\eta_{[x_1,...,\hat{x}_i,...,\hat{x}_{j+1},...,x_{n+1}],f^{i-1}[x_i,...,\hat{x}_{j+1},...,x_{n+1}]}\\ &&=
\eta_{[x_1,...,\hat{x}_{j+1},...,x_{n+1}],f^{j}[x_{j+1},...,x_{n+1}]}\eta_{[x_1,...,\hat{x}_i,...,\hat{x}_{j+1},...,x_{n+1}],f^{i-1}[x_{i},...,\hat{x}_{j+1},...,x_{n+1}]},
\end{eqnarray*}
which is precisely the $(j+1)^{th}$ term of the first summand of $\delta^n$ with the $i^{th}$ term of the first summand of $\delta^{n+1}$.

Similar manipulations show that the composition of $\tau$ from $\delta^n$ with the $i^{th}$ term of the first sum of $\delta^{n+1}$ cancels with the composition of the $(i+1)^{th}$ term of the first sum of $\delta^n$ with $\tau$ from $\delta^{n+1}$ (with the same relation holding for the compositions of $\tau$ and the second summands), the composition of the $i^{th}$ term of the second summand of $\delta^n$ with the $j^{th}$ term of the second summand of $\delta^{n+1}$ cancels with the $(j+1)^{th}$ term of the second summand of $\delta^n$ with the $i^{th}$ term of the second summand of $\delta^{n+1}$, the composition of the $i^{th}$ term of the second summand of $\delta^n$ with the $j^{th}$ term of the first summand of $\delta^{n+1}$ cancels with the $(j+1)^{th}$ term of the first summand of $\delta^n$ with the $i^{th}$ term of the second summand of $\delta^{n+1}$ for $i\leq j$. For the sake of brevity we will omit showing these manipulations.\\

\noindent All these relations leave three remaining terms, which cancel via the third axiom in Definition 4.1.
\end{proof}
\noindent The Table below presents all the above relations in an easier to read manner. In the table $\eta_i$ represents the $i^{th}$ summand of the first sum, $\circ_i$ represents the $i^{th}$ summand of the second sum, with order of composition determining its origin in $\delta^n$ or $\delta^{n+1}$.
\begin{center}
\begin{tabular}{ c c c }
$\eta_i \eta_j$ & $=$ & $\eta_{j+1}\eta_i$ \\
$\eta_i \circ_j$ & $=$ & $\circ_{j+1} \eta_i$ \\
$\eta_i \tau$ & $=$ & $\tau \eta_{i+1}$ \\
$\tau \circ_i $ & $=$ &  $ \circ_{i+1} \tau $\\
$\circ_i \circ_j$ & $=$ & $\circ_{j+1} \circ_i$ \\
\end{tabular}
\end{center}

\begin{example} Let $(X,*,f)$ be a $f$-quandle and consider $C_n^{\rm R}(X)$ to be the free
abelian group generated by
$n$-tuples $(x_1, \dots, x_n)$ of elements of a quandle $X$. When $\eta$ is the identity map and $\tau$ is the zero map, we obtain 
$\delta_{n}: C_{n}^{\rm R}(X) \to C_{n-1}^{\rm R}(X)$ by

\begin{eqnarray*}
\lefteqn{
\delta_{n} \phi (x_1, x_2, \dots, x_n) } \nonumber \\ && =
\sum_{i=2}^{n} (-1)^{i} \phi \left\{ (x_1, x_2, \dots, x_{i-1}, x_{i+1},\dots, x_n) \right.
\nonumber \\
&&
- \left. (x_1 \ast x_i, x_2 \ast  x_i, \dots, x_{i-1}\ast  x_i,f( x_{i+1}), \dots, f(x_n)) \right\}
\end{eqnarray*}
for $n \geq 2$
and $\delta_n=0$ for
$n \leq 1$.  Then one gets a notion of chain complex and then homology. \end{example}

\begin{example} {\rm 	Let $\eta$ be the multiplication by $T$ and $\tau$ be the multiplication by $S$ as in item 4 of Example \ref{Examples}.  Then the map $\delta^n$ becomes 
		
		\begin{eqnarray*}
			\lefteqn{
				\delta^n \phi(x_1, x_2, \dots, x_n) } \nonumber \\ && =
			\sum_{i=2}^{n} (-1)^{i}\left \{ T \phi(x_1, x_2, \dots, x_{i-1}, x_{i+1},\dots, x_n) \right.
			\nonumber \\
			&&
			- \left. \phi(x_1 \ast x_i, x_2 \ast  x_i, \dots, x_{i-1}\ast  x_i,f( x_{i+1}), \dots, f(x_n)) \right \} + (-1)^n S\phi(x_2,\dots, x_n).
		\end{eqnarray*}
			In particular, the $1$-cocycle condition is written for a function $\phi: X \rightarrow A$
			as
			$$  T \phi(x)  -S \phi(y) + \phi(x* y)=0.$$
			Note that this means that $\phi: X \rightarrow A$ is a quandle homomorphism.

		For $\psi: X \times X \rightarrow A$, 	the $2$-cocycle condition can be written as
			\begin{eqnarray*}
				\lefteqn{ T  \psi (x_1, x_2) +   \psi (x_1 * x_2, f(x_3)) } \\
				& = & T  \psi (x_1, x_3) + S \psi (x_2, x_3)
				+  \psi (x_1 * x_3, x_2 * x_3).
			\end{eqnarray*}
		} 
\end{example}

Now we give a couple explicit examples.

	\begin{example}
	In this example, we compute  the first and second cohomology groups of the $f$-quandle  $X =\mathbb{Z}_3$ with coefficients in the abelian group $\mathbb{Z}_3$.  The quandle $X=\mathbb{Z}_3$ here is considered with $T= S = 1$ and  $f(x) = 2x$.   A direct computations gives that  $\delta ^1$ is the zero map. Thus $H^1(X=\mathbb{Z}_3,A=\mathbb{Z}_3) $ is $3$-dimensional and a basis is $\{\chi_0, \chi_1, \chi_2\}$.
	  Now to compute the second cohomology group we consider a $2$--cocycle  
	  \[ \phi = \sum_{i, j \in X=\mathbb{Z}_3} \lambda_{(i,j)}\; \chi_{(i,j)},\]
	   where $\chi_{(i,j)}$ denote the characteristic functions and $\lambda_{(i,j)}$ are scalars. Then $\phi$ satisfies the following equation, for all the triples $(i, j, k) \in \mathbb{Z}_3,$
\[ \phi(i,j) + \phi(i+j,2k) -  \phi(i, k) - \phi(j, k) - \phi(i+k, j+k) = 0.\]
  Since $\delta^1 = 0$, then $H^2 = ker \delta^2$.  A direct computation gives that $H^2(X=\mathbb{Z}_3,A=\mathbb{Z}_3)$ is $2-$dimension with a basis $\{ \psi_1, \psi_2\}$, where  \[ \psi_1=- \chi_{(0,1)} + \chi_{(0,2)} - \chi_{(1,0)} + \chi_{(2,0)}, \]
  and \[\psi_2= \chi_{(0,1)} - \chi_{(0,2)} - \chi_{(1,2)} + \chi_{(2,1)}.\]
\end{example}

\begin{example}
	We compute  $H^1$ and $H^2$ with coefficients in the abelian group $\mathbb{Z}_3$ of the $f$-quandle  $X =\mathbb{Z}_3$,  $T=1,  S = 2$ and  $f(x) = 0$.   A direct computations gives  $H^1(\mathbb{Z}_3,\mathbb{Z}_3) $ is $1$-dimensional with  a basis  $\chi_0+ \chi_1+\chi_2$.
	  Now let consider a $2$--cocycle  $ \phi = \sum_{i, j \in \mathbb{Z}_3} \lambda_{(i,j)} \chi_{(i,j)}$ where $\chi (i,j)$ denote the characteristic function. Then $\phi$ satisfies the following equation, for all the triples $(i, j, k) \in \mathbb{Z}_3$,
\[ \phi(i,j) + \phi(i+j,0) -  \phi(i, k) - \phi(j, k) - \phi(i+k, j+k) = 0.\]
  Therefore a direct computation gives that $H^2$ is $3-$dimension with a basis $\{ \psi_1, \psi_2, \psi_3\}$, where $\psi_1 =  \chi_{(0,1)} + \chi_{(0,2)} - \chi_{(2,1)}, \psi_2= \chi_{(0,1)} - \chi_{(0,2)} - \chi_{(1,0)} + \chi_{(2,0)} $ and $\psi_3 =  \chi_{(0,1)} + \chi_{(0,2)} + \chi_{(2,1)}.$
\end{example}

\begin{remark}

In this remark, we make a short comment on connections to Knot Theory.  We will give a diagramatic notion for the twisted operator as follows and then show how it works for Reidemeister moves:\\
The relation between quandles and knots leads to the natural question of whether $f$-quandles could be used to define similar knot invariants. The introduction of the $f$-map in the axioms however causes the standard labeling scheme for the crossing diagram to fail to produce labelings invariant under the Reidemeister moves. Instead we found the following crossing diagram to be of greater interest.

\[\includegraphics[scale=0.25]{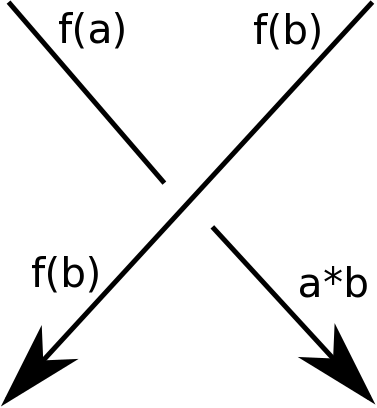} \]

\noindent Using this diagram we were able to show an invariance of the labeling scheme under the Reidemeister moves as show below.
\[\includegraphics[scale=0.25]{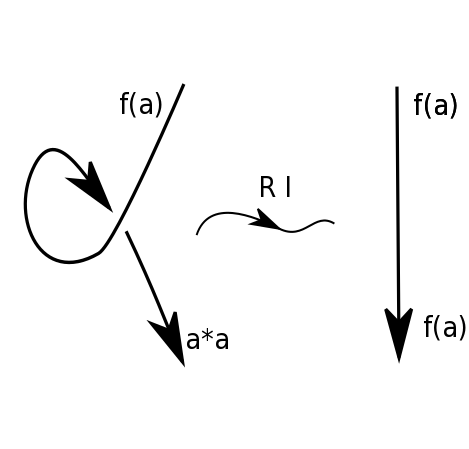} \quad \quad \quad \quad  \quad\includegraphics[scale=0.25]{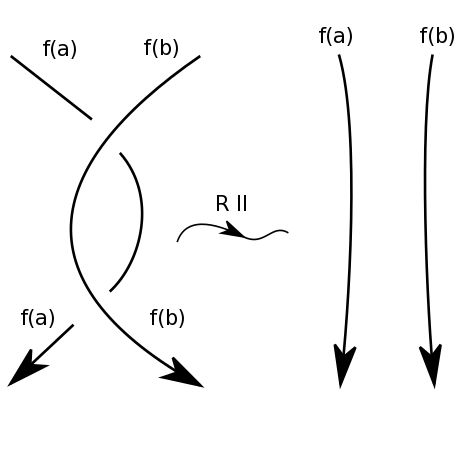}
\]
\[
\includegraphics[scale=0.45]{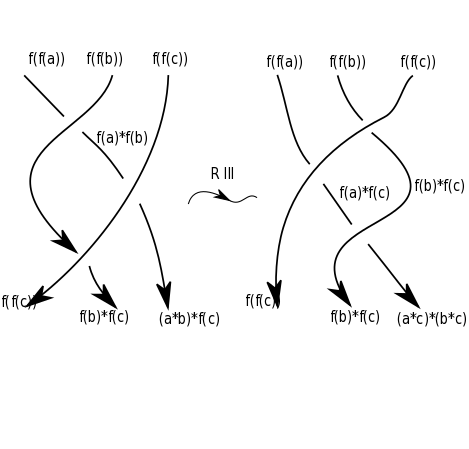}
\]

\noindent Unfortunately, we see that change of label at an under crossing is not a closed operation unless the $f$-map is bijective. That is, for a $f$-quandle $(X,*,f)$, the label of the arc entering a crossing must be an element of $f(X)$, while the label of the out going arc need not be. Indeed, to ensure a consistent labeling, it is clear one must only use labels from $f^n(X)$, where $n$ is such that $f^n(X)=f^{n+m}(X), \forall m$, to ensure one may return to the same labeling upon completing a circuit of the knot. If such a subset is selected though, one is restricted to a sub-$f$-quandle on which the $f$-map is bijective.  

\noindent While these issues are resolved when labeling via a $f$-quandle with a bijective $f$-map, Remark 3.2 shows that this reduces the labeling by the $f$-quandle to precisely that of a standard quandle.
\end{remark}

\end{document}